\numberwithin{equation}{section}
\begin{document}

{\theoremstyle{plain}
    \newtheorem{theorem}{\bf Theorem}[section]
    \newtheorem{proposition}[theorem]{\bf Proposition}
    \newtheorem{claim}[theorem]{\bf Claim}
    \newtheorem{lemma}[theorem]{\bf Lemma}
    \newtheorem{corollary}[theorem]{\bf Corollary}
}
{\theoremstyle{remark}
    \newtheorem{remark}[theorem]{\bf Remark}
    \newtheorem{example}[theorem]{\bf Example}
}
{\theoremstyle{definition}
    \newtheorem{defn}[theorem]{\bf Definition}
    \newtheorem{question}[theorem]{\bf Question}
    \newtheorem{conjecture}[theorem]{\bf Conjecture}
}

\def\move-in{\parshape=1.45true in 5true in}

\def\reg{\operatorname{reg}}

\def\I{\mathcal{I}}
\def\L{\mathcal{L}}
\def\O{\mathcal{O}}
\def\N{\mathbb{N}}
\def\PP{\mathbb{P}}
\def\V{\mathbb{V}}
\def\X{\mathbb{X}}
\def\Y{\mathbb{Y}}
\def\Z{\mathbb{Z}}


\title{Hilbert functions of double point schemes in $\PP^2$}

\author{A.V. Geramita}
\address{Queen's University \\ Department of Mathematics \\ Kingston, Ontario, Canada \\ and Universit\`a degli Studi di Genova \\ Dipartimento di Matematica \\ Genova, Italia}
\email{anthony.geramita@gmail.com}
\urladdr{http://www.mast.queensu.ca/$\sim$tony/}

\author{Huy T\`ai H\`a}
\address{Tulane University \\ Department of Mathematics \\
6823 St. Charles Ave. \\ New Orleans, LA 70118, USA}
\email{tai@math.tulane.edu}
\urladdr{http://www.math.tulane.edu/$\sim$tai/}
\thanks{The second named author is partially supported by NSA grant H98230-11-1-0165 and Louisiana BOR grant LEQSF(2007-10)-RD-A-30}

\keywords{Hilbert function, fat points, infinitesimal neighborhood}
\subjclass[2000]{13D02, 13D40, 14M05, 14N20}

\begin{abstract} We study the question of whether there is a minimum Hilbert function for double point schemes whose support is $s$ points with generic Hilbert function. Previous work shows that this question has an affirmative answer for $s \le 9$ and for $s = {d \choose 2}$ (for any $d \in \N$). In this paper, we provide evidence in the case $s = {d \choose 2} + 1$, and give an affirmative answer to the question when $s = 11$.
\end{abstract}

\maketitle


\begin{center}
{\it Dedicated to Professor H\`a Huy Kho\'ai on the occasion of his 65th birthday.}
\end{center}


\section{Introduction}

Let $R = k[x_0, \ldots , x_n] = \bigoplus_{j=0}^\infty R_j$ be the standard graded polynomial ring over an algebraically closed field $k$ of characteristic 0. Let $I = \bigoplus_{j=0}^\infty I_j$ be a homogeneous ideal of $R$.  The quotient ring $A = R/I = \bigoplus_{j=0}^\infty A_j$ (with $A_j = R_j/I_j$) is also a graded ring.  The {\it Hilbert function} of $A$,
$
H_A: \N \longrightarrow \N,
$
is defined by
$$
H_A(t) = \dim_k A_t = \dim_k R_t - \dim_kI_t = {t+n\choose n} - \dim_kI_t .
$$

Let $\X = \{ P_1, \ldots , P_s \} \subset \PP^n$ be a set of $s$ distinct points, and let
$$\wp_i = (L_{i1}, \ldots , L_{in} ) \subset R$$
be the defining ideal of $P_i$ (the $L_{ij}$'s, $1 \leq j \leq n$, are linearly independent linear forms). We often write $P_i \leftrightarrow \wp_i$ to indicate this correspondence between a point in $\PP^n$ and its defining ideal. Let
$$
I_\X = \wp_1 \cap \cdots \cap \wp_s
$$
be the defining ideal of $\X$. It is well known that for all $t \geq s-1$ and $A = R/I_\X$, one has $H_\X(t): = H_A(t) = s$.  This naturally raises the question of what are the possible values for $H_\X(t)$ for $t < s-1$ as $\X$ varies over all possible sets of $s$ distinct points in $\PP^n$. This question has been settled (cf. \cite{GMR}); a complete characterization of all the possible Hilbert functions for $s$ points in $\PP^n$ is known.

The situation gets a lot more complicated when one moves beyond the case of simple points (cf. \cite{BC, CHT, GHM1, GHM2, GMS}). In a relatively recent tour-de-force, J. Alexander and A. Hirschowitz \cite{AH} found all the Hilbert functions for the non-reduced, zero dimensional subschemes of $\PP^n$ defined by ideals $\wp_1^2\cap\cdots\cap \wp_s^2$, in the case that the underlying reduced subscheme is a general set of points in $\PP^n$.  This was the key point in their solution to the long-outstanding Waring Problem for Forms (see \cite{AH0, AH, C, Ger, I, I-K}). Note that knowing the Hilbert function of this non-reduced subscheme of $\PP^n$ is equivalent to knowing the dimension of the space of hypersurfaces of any given degree with singularities at the given general set of points.

This result of Alexander-Hirschowitz was one of the motivations for the authors of \cite{GMS} asking whether it was possible, even in $\PP^2$, to find the Hilbert functions of {\bf all} the non-reduced subschemes defined by ideals of the type
$$
\wp_1^2\cap\cdots\cap \wp_s^2
$$
whose underlying reduced scheme was {\bf any} set of $s$ distinct points.  This has turned out to be a very challenging problem.

This problem (as well as some obvious generalizations) has been taken up in several papers since then with some notable successes if the number of points is small ($\leq 9$) (cf. \cite{GHM1, GHM2}).  In this case, i.e. when $s \leq 9$, the exponents on the prime ideals $\wp_i$ can be any positive integers.  Yet, even with all this work, it is fair to say that we are still very far from a general solution to the problem.

One of the ideas of \cite{GMS} was to divide up the problem in a way that might help see what the possibilities are.  More specifically, they asked: if $\chi(H)$ is the set which consists of all the subsets of $s$ points of $\PP^2$ which share the same Hilbert function $H$ and if $\X = \{P_1, \ldots , P_s \} \in \chi(H)$ with $P_i \leftrightarrow \wp_i \subset k[x,y,z] = R$, what can be said about the Hilbert function of $R/J$ when
$$
J = \wp_1^2 \cap \cdots \cap \wp_s^2? \eqno{(*)}
$$

We call the scheme defined by the ideal $J$ the {\it double point scheme supported on} $\X$.  Indeed, it has now become somewhat standard to refer to the scheme defined by $J$ as $2\X$ (and thus referring to $H_{R/J}$ as $H_{2\X}$), even though that leaves open room for misinterpretation.

The {\it generic} Hilbert function for a set $\X$ of $s$ points in $\PP^2$ is
$$
H_{\X}(t) = \min\ \{ s , {t+2\choose 2} \}.
$$
We shall denote this Hilbert function by $H_{gen,s}$. From completely general considerations, it is easy to see that for $\X$ any set of $s$ distinct points, we have
$$
H_{2\X}(t) \leq \min\{ {t+2\choose 2}, 3s \} .
$$
Moreover, using the Theorem of Alexander and Hirschowitz \cite{AH}, this upper bound is achieved (for $s \neq 2, 5$) for almost every set of points $\X \in \chi(H_{gen, s})$.

This leads to a natural question:

\move-in\noindent {\bf Question:} Is there a lower bound for $H_{2\X}(-)$ for $\X \in \chi(H_{gen,s})$ which is actually achieved?

There seems no apparent reason why this question should have a positive answer, but in \cite{GMS} the authors showed that it did have a positive answer for an infinite family of $s$.  More precisely, they proved the following theorem.

\begin{theorem}[\protect{\cite[Theorem 7.10]{GMS}}] \label{thm.intro1}
Let $s = {d \choose 2}$ and let $C_d = \{P_1, \dots , P_s \}$ be the $s$ points which arise as the intersection of $d$ general lines in $\PP^2$.  Then
\begin{enumerate}
\item $H_{C_d}(t) = \min \{ {t+2\choose 2}, {d\choose 2} \}$ for every $t$, and so $C_d \in \chi(H_{gen, s}).$
\item Let $P_i \leftrightarrow \wp_i \subset k[x,y,z]$ and let $J = \wp_1^2\cap\cdots\cap \wp_s^2$ be the defining ideal of $2C_d$. Then
$$
H_{R/J}(t) = H_{2C_d}(t) =  \left\{
\begin{array}{rcl}
{t+2 \choose 2} & \text{if} & t \le d-1 \\
{d+1 \choose 2} + (t+1-d)d & \text{if} & d \le t \le 2d-3 \\
3{d \choose 2} & \text{if} & t \ge 2d-2.
\end{array} \right.
$$
\item If $\X \in \chi(H_{gen, s})$ then we have
$$
H_{2C_d}(t) \leq H_{2\X}(t) \hbox{ for every $t \in \N$.}
$$
\end{enumerate}
\end{theorem}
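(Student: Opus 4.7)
The proof divides into the three assertions, which we treat in order. Throughout let $F = \ell_1 \cdots \ell_d$ denote the product of the $d$ defining linear forms of the general lines cutting out $C_d$. For \textbf{(1)}, $C_d$ is a classical \emph{star configuration}: its ideal $I_{C_d}$ is minimally generated by the $d$ forms $F_j = \prod_{i \neq j} \ell_i$ of degree $d-1$. From this presentation one computes directly (or inductively, by restricting to one of the lines $\ell_i$, on which there sit $d-1$ points of $C_d$ not lying on $C_{d-1}$) that $H_{C_d}(t) = \binom{t+2}{2}$ for $t \le d-2$ and $H_{C_d}(t) = \binom{d}{2}$ for $t \ge d-1$, which is exactly $H_{gen,s}$.

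For \textbf{(2)}, the central observation is that any $G \in (I_{2C_d})_t$, restricted to a line $\ell_a$ of the configuration, must vanish to order at least $2$ at each of the $d-1$ points $P_{ab} \in \ell_a \cap C_d$. Hence either $\deg(G|_{\ell_a}) \ge 2(d-1)$ or $\ell_a \mid G$. In the range $t \le 2d-3$ the first alternative is impossible, forcing $\ell_a \mid G$ for every $a$ and therefore $F \mid G$. Conversely, every multiple $FH$ lies in each $\wp_{ab}^2$: locally at $P_{ab}$ we have $F = \ell_a \ell_b \cdot u$ with $u$ a unit, so $FH \in (\ell_a \ell_b) \subset (\ell_a, \ell_b)^2 = \wp_{ab}^2$. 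Thus
$$
(I_{2C_d})_t = \begin{cases} 0 & \text{if } 0 \le t \le d-1, \\ F \cdot R_{t-d} & \text{if } d \le t \le 2d-3, \end{cases}
$$
and subtracting $\binom{t-d+2}{2}$ from $\binom{t+2}{2}$ yields the stated formula in each range. One checks that the value at $t = 2d-3$ already equals $3\binom{d}{2} = \deg(2C_d)$; since the Hilbert function of a zero-dimensional scheme is non-decreasing and bounded above by its degree, it must equal $3\binom{d}{2}$ for every $t \ge 2d-3$.

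For \textbf{(3)}, we must show that $H_{2C_d}(t) \le H_{2\X}(t)$ for every $\X \in \chi(H_{gen,s})$ and every $t$. The natural route is upper semicontinuity of $\dim(I_{2(-)})_t$: if for each such $\X$ one can produce a flat one-parameter family of reduced zero-dimensional schemes whose generic fiber is $\X$, whose special fiber is $C_d$, and whose general member lies in $\chi(H_{gen,s})$, then upper semicontinuity immediately gives $\dim(I_{2\X})_t \le \dim(I_{2C_d})_t$ for each $t$, which is the required inequality. The main obstacle is precisely this last step: proving that $C_d$ lies in the closure of every irreducible component of $\chi(H_{gen,s})$ via a degeneration along which the Hilbert function does not drop. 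This is the crux of the theorem, rather than a formal consequence of (2); it requires exploiting the structural role of the $d$ lines and cannot be deduced from the explicit values of $H_{2C_d}$ alone.
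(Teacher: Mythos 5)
First, a point of reference: the paper does not prove this statement at all — it quotes it verbatim as \cite[Theorem 7.10]{GMS} — so your attempt has to stand on its own. Parts (1) and (2) of your proposal are correct and are the standard star-configuration computations: the restriction-to-$\ell_a$ argument (a form vanishing to order $2$ at the $d-1$ nodes on $\ell_a$ is either divisible by $\ell_a$ or has degree at least $2d-2$) does give $(I_{2C_d})_t = F\cdot R_{t-d}$ for $d\le t\le 2d-3$ and $(I_{2C_d})_t=0$ for $t\le d-1$, the converse containment via $\ell_a\ell_b\in\wp_{ab}^2$ is right, and the arithmetic checks out, including that the value $3\binom{d}{2}$ is already attained at $t=2d-3$.

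Part (3), however, is the entire content of the theorem (it is the affirmative answer to the paper's {\bf Question} for $s=\binom{d}{2}$), and your proposal does not prove it — as you acknowledge. Moreover, the semicontinuity route you sketch has a directional obstruction, not merely a missing construction. Writing $h(Z)=\dim_k(I_{2Z})_t$: the locus $\chi(H_{gen,s})$ is irreducible (it is open in the space of $s$-tuples of points) and contains both $C_d$ and $\X$, so families joining them always exist; but upper semicontinuity of $h$ along such a family only compares each of $h(C_d)$ and $h(\X)$ with the value at the \emph{general} member, yielding $h(C_d)\ge h(\mathrm{gen})$ and $h(\X)\ge h(\mathrm{gen})$ and no inequality between $h(C_d)$ and $h(\X)$. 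To run your argument you would need, for every individual (possibly very special) $\X\in\chi(H_{gen,s})$, a family in which $\X$ itself realizes the generic value of $h$ and whose closure contains $C_d$; nothing you write produces this, and it is not clear it exists. The proofs of such minimality statements in this circle of ideas — including the proof of the $s=11$ analogue that occupies all of Section 3 of this paper — do not proceed by degeneration at all, but by direct upper bounds on $\dim_k(I_{2\X})_t$ for an arbitrary $\X$ with generic Hilbert function, using B\'ezout's theorem together with structure results on Hilbert functions such as \cite[Proposition 5.2]{GMR}. So the gap in your part (3) is not a technical lemma left to the reader; it is the theorem itself.
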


Theorem \ref{thm.intro1} shows that there is a minimum Hilbert function for the double point schemes supported on $s = 3, 6, 10, 15, 21, ...$ points with general Hilbert function.  The work done in \cite{GHM1, GHM2} further shows that such minima also exist for $s \leq 9$ points.  Thus, the first open case for the {\bf Question} is for $s = 11$ points in $\PP^2$.

The goal of this paper is to give an affirmative answer to the {\bf Question}  when $s = 11$.  Since our proof involves looking at many separate cases, we hope that our solution will inspire a more interesting attack on this thorny problem.

In the next section, we consider the case when $s$ is of the form $s = {d \choose 2} + 1$. In particular, when $d = 5$, $s = 11$. We recall from \cite{GMS} those configurations of $s$ points over which the double point scheme is conjectured to have the minimum Hilbert function. We provide supportive evidence to the {\bf Question} in this case by proving, in Proposition \ref{pro.atmostt}, that the assertion holds for the ``first half'' of the Hilbert function. The last section is devoted to the case $s = 11$. An affirmative answer to the {\bf Question} in this case is obtained by a series of case analyses, which are to be found in Propositions \ref{pro.60} - \ref{pro.62}.

\noindent{\bf Acknowledgement.} Our work began when the second named author visited the first named author at Queen's University. We would like to thank Queen's University and its Mathematics department for their hospitality. We are very grateful for the many splendid conversations we have had on these questions with Brian Harbourne, Juan Migliore and Mike Roth. We would also like to thank an anonymous referee for a careful read of the paper and useful suggestions.


\section{Line configurations and the first half of Hilbert functions}

In this section, we give some evidence that the {\bf Question} should have a positive answer for $s = {d \choose 2} + 1$ points.

We start by recalling the description of those configurations of points which are conjectured to be the support of double point schemes having the minimum Hilbert functions.

\begin{defn} \label{def.conf}
Let $L_1, \dots, L_{d+1}$ be a set of $(d+1)$ general lines in $\PP^2$.  (Thus, each $L_i$ meets the remaining $d$ lines at $d$ distinct points.) Denote by $C_{d+1}$ the configuration of ${d+1 \choose 2}$ intersection points of the $L_i$s. For any $0 \le r \le d$, let $C_{d,r}$ be the subconfiguration of $C_{d+1}$ obtained by removing $(d-r)$ intersection points that are on the single line $L_{d+1}$.
\end{defn}

By \cite[Lemma 7.8]{GMS}, it is known that $C_{d,r}$ has the Hilbert function of ${d \choose 2}+r$ general points in $\PP^2$. In particular, $C_{d,1}$ has the Hilbert function of $s = {d \choose 2} + 1$ generic points in $\PP^2$. As before, we denote this generic Hilbert function by $H_{gen,s}$.


By abuse of notation, we shall use $F$ to denote both a homogeneous form in the polynomial ring $R = k[x, y, z]$ and the curve $\V(F)$ in $\PP^2$.

\begin{proposition} \label{pro.atmostt}
Assume that $d \ge 3$. If $\X \in \chi(H_{gen, s})$, $s = {d \choose 2} +1$, then $H_{2\X}(t) = {t+2 \choose 2}$ for all $t \le d$.
\end{proposition}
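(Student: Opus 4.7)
My plan is to establish the stronger vanishing $(I_{2\X})_t = 0$ for every $t \le d$; since $H_{2\X}(t) \le \binom{t+2}{2}$ trivially, this gives the claimed equality. The proof divides into three ranges. For $t \le d-2$, the inequality $\binom{t+2}{2} \le \binom{d}{2} < s$ together with the generic Hilbert function hypothesis gives $(I_\X)_t = 0$, and the inclusion $I_{2\X} \subseteq I_\X$ finishes this range. For $t = d-1$, I would use a partial derivative argument: if $F \in (I_{2\X})_{d-1}$, then $F$ is singular at each $P_i \in \X$, so the partial derivatives $F_x, F_y, F_z$ all lie in $(I_\X)_{d-2} = 0$. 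In characteristic zero this forces $F$ to be constant, contradicting $\deg F = d-1 \ge 2$.

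The main case is $t = d$. Suppose $0 \ne F \in (I_{2\X})_d$. The classical count $\sum_i \binom{d_i - 1}{2} + \sum_{i<j} d_i d_j = \binom{d-1}{2} + (k-1)$ bounds the singularities of a squarefree plane curve of degree $d$ with $k$ irreducible components; since $k \le d$, this is at most $\binom{d}{2} < s$, so $F$ cannot be squarefree. Writing $F = G^2 H$ with $G$ of degree $a \ge 1$ the product of the irreducible factors of $F$ of multiplicity $\ge 2$, and $H$ the squarefree complement of degree at most $d-2a$, I observe that $\V(G) \subseteq \operatorname{Sing}(F)$ and every singular point of $F$ off $\V(G)$ is a singular point of $H$. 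Hence
\[
|\X \cap \V(G)| \;\ge\; s - \binom{d-2a}{2} \;=\; a(2d-2a-1) + 1.
\]

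I would conclude by deriving a contradiction from a matching upper bound. A short computation shows the right-hand side above is at least $s$ exactly when $d \in \{2a, 2a+1\}$; in these boundary cases $\X \subseteq \V(G)$ forces $G \in (I_\X)_a$, but the generic Hilbert function together with the readily checked inequality $\binom{a+2}{2} \le s$ for all $a \le d/2$ forces $(I_\X)_a = 0$, a contradiction. When $d \ge 2a+2$, I bound $|\X \cap \V(G)|$ from above using the $(d-1)$-dimensional linear system $(I_\X)_{d-1}$: for each irreducible component $G_i$ of $G$ of degree $a_i$, Bezout's theorem applied to a form in $(I_\X)_{d-1}$ not divisible by $G_i$ gives $|\X \cap \V(G_i)| \le a_i(d-1)$; in the exceptional case that $G_i$ divides every element of $(I_\X)_{d-1}$, a dimension count forces $|\X \cap \V(G_i)| \le d + (a_i-1)(2d-a_i)/2$. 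Both bounds satisfy $|\X \cap \V(G_i)| \le a_i d$, so summation gives $|\X \cap \V(G)| \le ad$, and a direct algebraic check confirms $ad < a(2d-2a-1)+1$ whenever $d \ge 2a+2$, producing the desired contradiction. The main obstacle in this plan is precisely the case analysis at $t = d$: one must combine the structural singularity count on $F$ with the Bezout-based Hilbert-function constraint on the multiple-component curve $\V(G)$ uniformly across all admissible pairs $(a, d)$, with particular care at the boundary $d = 2a+1$ where the two bounds nearly coincide.
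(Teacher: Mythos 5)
Your proof is correct, and its skeleton coincides with the paper's: reduce to the vanishing of $\big(I_{2\X}\big)_t$, dispose of squarefree $F$ by the singular-point count $\sum_i{\lambda_i-1\choose 2}+\sum_{i<j}\lambda_i\lambda_j={d-1\choose 2}+(r-1)\le{d\choose 2}<s$, and, for non-reduced $F=G^2H$, derive the same lower bound $s-{d-2a\choose 2}=a(2d-2a-1)+1$ on $|\X\cap\V(G)|$. (Your separate treatments of $t\le d-2$ and $t=d-1$ are fine but unnecessary: since $I_{2\X}$ is an ideal, a nonzero form of degree $t<d$ times a power of a linear form gives one of degree $d$, so only $t=d$ needs an argument --- this is exactly how the paper reduces.) Where you genuinely diverge is in the matching upper bound. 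The paper concludes by asserting, from $\Delta H_\X$, that a curve of degree $\theta<d$ contains at most ${\theta+2\choose 2}-1$ points of $\X$; you instead bound $|\X\cap\V(G)|$ component by component via B\'ezout against the $(d-1)$-dimensional system $(I_\X)_{d-1}$, obtaining $a_i(d-1)$ in general and $a_id-{a_i\choose 2}$ (which is your $d+(a_i-1)(2d-a_i)/2$, coming from the estimate $|\X\setminus\V(G_i)|\ge{d+1-a_i\choose 2}-(d-1)$) when $G_i$ is a fixed component of $(I_\X)_{d-1}$, and you treat the boundary $d=2a$ separately via $\X\subseteq\V(G)$ and $(I_\X)_a=0$. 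Your route is more self-contained and sidesteps the delicate question of exactly how many points of $\X$ a low-degree curve can contain (the paper's stated bound, taken literally at $\theta=1$, would say a line meets $\X$ in at most $2$ points, which is not what is true or needed; the correct bound $\sum_j\min\{\theta,\Delta H_\X(j)\}$ still yields their contradiction). The only step you must write out in full is the ``dimension count'' in the exceptional B\'ezout case; as reconstructed above it gives precisely your claimed bound, so there is no gap.
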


\begin{proof} Let $2\X$ denote the double point scheme whose support is $\X$ and let $J_{2\X}$ be its defining ideal in $R$. It suffices to show that the ideal $J_{2\X}$ contains no form of degree $d$. Suppose, by contradiction, that $F \in J_{2\X}$ is a form of degree $d$. We shall consider two cases.

\noindent{\bf Case 1.} $F$ is reduced. Since an irreducible curve of degree $d$ has at most ${d-1 \choose 2}$ singular points, $F$ must be reducible. Let $F = G_1 \dots G_r$ be the factorization of $F$ into irreducible forms, where $\deg G_i = \lambda_i$ (in particular, $2 \le r \le d$ and $\sum_{i=1}^r \lambda_i = d$). Each curve $G_i$ contains at most ${\lambda_i - 1 \choose 2}$ singular points, and the curves $G_i$ intersect at, at most, $\sum_{i < j} \lambda_i \lambda_j$ distinct points. Thus, $F$ has at most
$$D = \sum_{i=1}^r {\lambda_i - 1 \choose 2} + \sum_{i< j} \lambda_i\lambda_j$$
singular points. However, since $F$ is singular at all the points in $\X$, we must have $D \ge {d \choose 2} + 1 = {d-1 \choose 2} + d$. A contradiction will result if we can show that
\begin{align}
D = {d-1 \choose 2} + r - 1  \label{eq1}
\end{align}
since ${d-1 \choose 2} + r-1 <  {d-1 \choose 2} + d$.

Indeed, we shall use induction on $r$ to prove this equality. For $r = 2$, we can easily verify the equality
$${\lambda_1 - 1 \choose 2} + {\lambda_2 - 1 \choose 2} + \lambda_1\lambda_2 = {\lambda_1 + \lambda_2 - 1 \choose 2} + 1.$$
Suppose that (\ref{eq1}) is proved for $r-1$. That is, suppose
$$\sum_{i=1}^{r-1} {\lambda_i -1 \choose 2} + \sum_{i < j < r} \lambda_i\lambda_j = {d'-1 \choose 2} + (r-2)$$
where $d' = \sum_{i=1}^{r-1} \lambda_i$. It follows from the case when $r = 2$ that
$${d' - 1 \choose 2} + {\lambda_r - 1 \choose 2} + d'\lambda_r = {d'+\lambda_r - 1 \choose 2} + 1.$$
That is, ${d' - 1 \choose 2} + {\lambda_r - 1 \choose 2} + (\sum_{i=1}^{r-1}\lambda_i)\lambda_r = {d - 1 \choose 2} + 1.$ Thus, together with the induction hypothesis, we have
$$\sum_{i=1}^r {\lambda_i - 1 \choose 2} + \sum_{i < j} \lambda_i\lambda_j = {d-1 \choose 2} + 1 + (r-2) = {d-1 \choose 2} + (r-1).$$

\noindent{\bf Case 2.} $F$ is non-reduced. In this case, $F$ can be written as $F = G^2H$, where $G$ and $H$ are homogeneous forms and $H$ is reduced. Let $\deg G = \theta$ and $\deg H = \lambda$ (then $d = 2\theta+\lambda$).

As in Case 1, $H$ has at most ${\lambda \choose 2}$ singular points. This implies that at least ${d \choose 2} + 1 - {\lambda \choose 2}$ of the points of $\X$ come from $G^2$ or the intersection $G \cap H$. I.e., $G$ must contain at least ${d \choose 2} + 1 - {\lambda \choose 2}$ points of $\X$. Notice that ${2\theta+\lambda \choose 2} + 1 - {\lambda \choose 2} > {\theta+2 \choose 2} -1$.

It can be further seen from the Hilbert function of $\X$ and its difference function,
\begin{align*}
H_{\X} & : \ \ 1 \ \ 3 \ \ 6 \ \ \dots \ \ {d \choose 2} \ \ {d \choose 2} + 1 \ \ \rightarrow \\
\Delta H_{\X} & : \ \ 1 \ \ 2 \ \ 3 \ \ \dots \ \ d-1 \ \ 1 \ \ 0 \ \ \rightarrow
\end{align*}
that for $\theta < d$ a degree $\theta$ curve contains at most $(1 + 2 + \dots + \theta + \theta) = {\theta+2 \choose 2} - 1$ points of $\X$. This gives a contradiction.
\end{proof}


\section{First infinitesimal neighborhood of 11 points}

This section is devoted to showing that the {\bf Question} has an affirmative answer for $s = 11$ points. Let $C_{5,1}$ be the configuration defined above, and let $2C_{5,1}$ be the double point scheme supported on $C_{5,1}$. By \cite[p. 607]{GMS}, the Hilbert function of $2C_{5,1}$ and its difference function are
\begin{align*}
H_{2C_{5,1}} & : \ \ 1 \ \ 3 \ \ 6 \ \ 10 \ \ 15 \ \ 21 \ \ 26 \ \ 31 \ \ 32 \ \ 33 \ \ \rightarrow \\
\Delta H_{2C_{5,1}} & : \ \ 1 \ \ 2 \ \ 3 \ \ 4 \ \ 5 \ \ 6 \ \ 5 \ \ 5 \ \ 1 \ \ 1 \ \ 0 \ \ \rightarrow
\end{align*}

We shall show that the Hilbert function $H_{2C_{5,1}}$ is the smallest Hilbert function for double point schemes whose support has the generic Hilbert function $H_{gen, 11}$.

For the rest of the paper, let $\X \in \chi(H_{gen,11})$. Our goal is to prove that $H_{2C_{5,1}} \le H_{2\X}$, i.e.,
$$
H_{2C_{5,1}}(t) \le H_{2\X}(t) \text{ for every } t \in \N. \eqno{(**)}
$$

It follows from Proposition \ref{pro.atmostt} that
$$H_{2C_{5,1}}(t) = H_{2\X}(t) \text{ for any } t \le 5.$$
Thus, it remains to prove the inequality $(**)$ for $t \ge 6$. Clearly, $H_{2\X}(6) \le \dim_k R_6 = 28$. Therefore, we shall proceed by considering different cases when $H_{2\X}(6)$ is 28, 27, 26 or $\le 26$.

The following lemma appears to be known, but we could not find it in the literature.

\begin{lemma} \label{BertiniType}
Let $\L$ be a linear series of dimension at least 2 on a projective scheme $X$. Assume that $\L$ has no fixed component. Then a general divisor in $\L$ is reduced.
\end{lemma}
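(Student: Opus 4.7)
The plan is to pass to the morphism to projective space defined by $\L$ and invoke Bertini's theorem on hyperplane sections. Write $\L = \PP(V)$ for some $(\dim\L+1)$-dimensional vector space $V$ of sections of the underlying invertible sheaf on $X$, so $\dim V \ge 3$. The hypothesis that $\L$ has no fixed component means the base locus $B$ of $V$ has codimension at least $2$ in $X$. Consequently $V$ determines a morphism
$$
\varphi: U := X \setminus B \longrightarrow \PP(V^*) \cong \PP^N, \qquad N = \dim \L,
$$
and every divisor $D \in \L$ arises as the closure in $X$ of $\varphi^{-1}(H)$ for a hyperplane $H \subseteq \PP^N$. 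Since $B$ has codimension $\ge 2$, this closure cannot introduce a codimension-$1$ doubled component, so $D$ is reduced on $X$ if and only if $\varphi^{-1}(H)$ is reduced on $U$.

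Let $Y = \overline{\varphi(U)} \subseteq \PP^N$ be the scheme-theoretic image. Because the sections in $V$ are linearly independent, $Y$ is non-degenerate in $\PP^N$, and in particular $\dim Y \ge 1$. The classical Bertini theorem on hyperplane sections then says that a general hyperplane $H \subseteq \PP^N$ contains no irreducible component of $Y$ and meets $Y$ in a subscheme $Y \cap H$ that is reduced on each component of $Y$ it meets — the latter reducedness uses characteristic $0$, the standing assumption of the paper.

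Finally, one transfers reducedness back to $U$ along $\varphi$. In characteristic $0$, generic smoothness implies that $\varphi$ is smooth over a dense open subset of every irreducible component of $Y$ that it dominates. Therefore, for a general $H$, the pullback $\varphi^{-1}(H) = \varphi^{-1}(Y \cap H)$ is reduced on $U$, and its closure in $X$ is a reduced divisor $D \in \L$, as desired.

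The main obstacle is precisely the pullback step: one must justify that reducedness of $Y \cap H$ propagates to $\varphi^{-1}(H)$. This is where characteristic $0$ is essential, entering through generic smoothness. In positive characteristic an inseparable factor of $\varphi$ can create multiplicities even in pullbacks of reduced divisors — already the Frobenius-type pencil $\langle x^p, y^p \rangle$ on $\PP^1$ shows that the lemma can fail, every member being a $p$-th power despite the pencil having no fixed component. Once generic smoothness is in hand, the argument reduces cleanly to the standard Bertini statement for hyperplane sections.
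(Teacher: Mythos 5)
Your proof is correct in substance but takes a genuinely different route from the paper's. The paper argues directly on $X$ by contradiction: if the general member $F$ is non-reduced, write $F=G^2H$ with $G$ irreducible; every point of $G$ is then a singular point of $F$, and the characteristic-zero Bertini theorem in the form \emph{``a general member of a linear system is smooth away from the base locus''} forces $G$ into the base locus, so $G$ divides every member and is a fixed component --- contradiction. You instead perform the textbook reduction: pass to the morphism $\varphi$ defined by $\L$, apply Bertini to hyperplane sections of the image $Y$, and pull back via generic smoothness. Both arguments rest on the same characteristic-zero input; the paper's version buries it inside the chosen form of Bertini and is shorter, while yours isolates exactly where characteristic zero enters (generic smoothness) and correctly exhibits the Frobenius pencil as the obstruction in characteristic $p$, which is a genuine gain in transparency. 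Two small caveats on your side: the ``therefore'' in your pullback step still tacitly uses that reducedness of a divisor is checked at the generic points of its components, and that for general $H$ those generic points lie over the open set where $\varphi$ is smooth (no component of $Y\cap H$ sits inside the non-smooth locus, and no divisor of $U$ is contracted into $Y\cap H$); this is routine but worth a sentence. Also, both your argument and the paper's implicitly treat $X$ as an integral, essentially smooth scheme (unique factorization of divisors, generic smoothness of the source), which is harmless here since the lemma is only applied to $\PP^2$; and, amusingly, neither proof actually uses the hypothesis $\dim\L\ge 2$.
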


\begin{proof} Let $F$ be a general divisor of $\L$. Suppose that $F$ is not reduced. Then $F$ can be written as $F = G^2H$, where $G$ is irreducible. Observe that all points (infinitely many) of $G$ are singular points of $F$. By Bertini's theorem, these points are in the base locus of $\L$. Let $F'$ be any other element in $\L$. Since all points of $G$ are in the base locus of $F'$, $G$ is a factor of $F'$. Thus, $G$ is a fixed component of $\L$, a contradiction.
\end{proof}

\begin{proposition} \label{pro.60}
If $H_{2\X}(6) = 28$ then $H_{2\X}(7) \ge 31$. This, in particular, implies that $H_{2\X}(8) \ge 32$, $H_{2\X}(9) \ge 33$ and thus $H_{2\X} \ge H_{2C_{5,1}}$.
\end{proposition}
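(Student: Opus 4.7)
The plan is to first establish the main inequality $H_{2\X}(7) \ge 31$ by contradiction, and then deduce the remaining bounds from constraints on the $h$-vector $\Delta H_{2\X}$.

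Suppose $\dim (J_{2\X})_7 \ge 6$, so the linear system $\mathcal{L} = |(J_{2\X})_7|$ has projective dimension at least $5$. By Lemma \ref{BertiniType}, either $\mathcal{L}$ admits a fixed component, or its general member is reduced. I would handle both cases uniformly by writing a general $F \in \mathcal{L}$ as $F = G \cdot F'$, where $G$ is the (possibly trivial) fixed part of degree $m \ge 0$ and $F'$ is the variable form of degree $7 - m$. For each point $P \in \X$, the condition $F \in J_{2\X}$ translates to $\mathrm{mult}_P(F') \ge 2 - \mathrm{mult}_P(G)$. The generic Hilbert function of $\X$ bounds $|\X \cap G|$ in terms of $\deg G$, using the same kind of incidence counts as in Case 2 of Proposition \ref{pro.atmostt}. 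For each admissible degree $m$, the variable family $\{F'\}$ would then need to be an ``unreasonably large'' subsystem of forms of degree $7 - m$ vanishing doubly on $\X \setminus G$ and simply on $\X \cap G$; comparing with the classification of Hilbert functions of double point schemes on $\le 10$ points from \cite{GHM1, GHM2} rules out the fixed-component subcases. In the fully reduced case, one factors $F = G_1 \cdots G_r$ and pairs two general members $F, F' \in \mathcal{L}$: by B\'ezout $|F \cap F'| = 49$, of which $2\X$ contributes $\ge 44$, and the $5$-dimensional freedom in $\mathcal{L}$ is leveraged via residuation and the Hilbert-function data of $\X$ to construct a sextic doubly vanishing on all of $\X$, contradicting the hypothesis $(J_{2\X})_6 = 0$.

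Once $H_{2\X}(7) \ge 31$, equivalently $\Delta H_{2\X}(7) \ge 3$, is established, the remaining inequalities follow from standard $h$-vector analysis. Since $R/J_{2\X}$ is Cohen--Macaulay of Krull dimension $1$, the first difference $\Delta H_{2\X}$ is the Hilbert function of an Artinian quotient of $k[x, y]$. By Proposition \ref{pro.atmostt}, $\Delta H_{2\X}(0), \ldots, \Delta H_{2\X}(6) = 1, 2, 3, 4, 5, 6, 7$ (sum $28$), and since $\deg(2\X) = 33$, $\sum_{t \ge 7} \Delta H_{2\X}(t) = 5$. Macaulay's bound in two variables forces $\Delta H_{2\X}(t+1) \le \Delta H_{2\X}(t)$ whenever $\Delta H_{2\X}(t) \le t$; combining this with $\Delta H_{2\X}(7) \ge 3$ and the sum constraint leaves only the tails $(3, 1, 1)$, $(3, 2)$, $(4, 1)$, $(5)$, each of which yields $H_{2\X}(8) \ge 32$ and $H_{2\X}(9) \ge 33$. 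Combined with Proposition \ref{pro.atmostt}, this gives $H_{2\X} \ge H_{2C_{5,1}}$ pointwise.

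The main obstacle is the casework for ruling out $\dim (J_{2\X})_7 \ge 6$: in each subcase (each possible degree of the fixed component, and the reduced reducible factorizations), the dimensional counts are only marginally obstructed, requiring delicate use of $(J_{2\X})_6 = 0$ together with the generic Hilbert function of $\X$ and subconfiguration Hilbert-function data to extract the contradiction.
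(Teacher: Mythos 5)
Your reduction of the tail of $\Delta H_{2\X}$ once $\Delta H_{2\X}(7)\ge 3$ is known is fine, but the way you propose to establish that core inequality does not work, and it is here that your route diverges from (and falls short of) the paper's. The paper does not analyze the linear system $\big(I_{2\X}\big)_7$ geometrically at all in this case. Instead it observes that $\reg I_{\X}=5$, so by \cite{GGP} one has $\reg I_{2\X}\le 10$; hence $\Delta H_{2\X}$ vanishes from degree $10$ on, and the tail is a non-increasing triple $(a,b,c)$ with $a+b+c=5$, leaving only $(5,0,0)$, $(4,1,0)$, $(3,2,0)$, $(3,1,1)$, $(2,2,1)$. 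The first four give $a\ge 3$ at once, and $(2,2,1)$ is excluded by the maximal-growth theorem \cite[Proposition 5.2]{GMR}: $\Delta H_{2\X}(7)=\Delta H_{2\X}(8)=2$ would force a conic containing a degree-$18$ subscheme of $2\X$, which is impossible when $\X$ has generic Hilbert function. Neither of these two inputs appears in your proposal, and without the regularity bound your Macaulay monotonicity alone cannot rule out tails such as $(2,2,1)$ or $(1,1,1,1,1)$.

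Concretely, the broken step in your substitute argument is the B\'ezout count in the reduced case: two septics both singular along $\X$ meet with multiplicity at least $4\cdot 11=44$, but $44\le 49=\deg F\cdot\deg F'$, so no common component is forced and pairing two general members of $\mathcal{L}$ yields nothing. (This count does work one degree down, since $42<44$, which is exactly why the paper uses it in Proposition \ref{pro.61}, where a sextic exists in the ideal; here $\big(I_{2\X}\big)_6=0$ and that lever is unavailable.) The remaining steps --- ``residuation \dots to construct a sextic doubly vanishing on all of $\X$'' and ``comparing with the classification \dots rules out the fixed-component subcases'' --- are statements of intent rather than arguments; the expected dimension of $\big(I_{2\X}\big)_7$ is $36-33=3$, so you would be trying to exclude a system only two more than expected, and nothing in your sketch explains how that case analysis closes. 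As written, the first and essential half of the proposition is not proved.
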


\begin{proof} Since $\reg I_\X = \reg I_{C_{5,1}} = 5$, by a result of \cite{GGP}, we have $\reg I_{2\X} \le 10$. Thus, in this case, the difference function of the Hilbert function of $2\X$ has the form
$$\Delta H_{2\X}\ : \ \ 1 \ \ 2 \ \ 3 \ \ 4 \ \ 5 \ \ 6 \ \ 7 \ \ a \ \ b \ \ c \ \ 0 \ \ \rightarrow$$
where $a+b+c = 5$ and $a \ge b \ge c$. The possibilities for $(a,b,c)$ are $(5,0,0)$, $(4,1,0)$, $(3,2,0)$, $(3,1,1)$ and $(2,2,1)$.

In the first 4 cases, $H_{2\X}(7) \ge 28+3 = 31$. In the last case, by \cite[Proposition 5.2]{GMR}, there exists a conic $C$ containing a subscheme of $2\X$ with multiplicity 18.

A reduced conic has at most 1 singular point, so either $C$ contains 9 simple points of $\X$ or a double point in $2\X$ and some simple points in $\X$. In the first case, there are too many points of $\X$ on a conic, violating the hypothesis that $\X$ has generic Hilbert function. In the second case, the multiplicity of the subscheme of $2\X$ on $C$ is an odd number, which cannot be 18.

If $C$ is non-reduced then $C = L^2$, where $L$ is a linear form. This implies that $L$ contains 9 points of $\X$, which again violates the hypothesis that $\X$ has generic Hilbert function.
\end{proof}

\begin{proposition} \label{pro.61}
If $H_{2\X}(6) = 27$ then $H_{2\X}(7) \ge 31$. Thus $H_{2\X} \ge H_{2C_{5,1}}$.
\end{proposition}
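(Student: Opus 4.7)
My plan is to follow closely the structure of the proof of Proposition \ref{pro.60}. By Proposition \ref{pro.atmostt} and the regularity bound $\reg I_{2\X} \le 10$ from \cite{GGP} (already used in Proposition \ref{pro.60}), the hypothesis $H_{2\X}(6) = 27$ will force
$$\Delta H_{2\X} \,:\, 1,\, 2,\, 3,\, 4,\, 5,\, 6,\, 6,\, a,\, b,\, c,\, 0,\, \ldots$$
with $a + b + c = 33 - 27 = 6$. Since $a, b, c$ are all bounded by $6 \le 7$, Macaulay's bound on growth of Hilbert functions of Artinian reductions will give $a \ge b \ge c \ge 0$, leaving the seven possibilities $(6,0,0)$, $(5,1,0)$, $(4,2,0)$, $(4,1,1)$, $(3,3,0)$, $(3,2,1)$, and $(2,2,2)$. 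In the first four, $a \ge 4$ immediately yields $H_{2\X}(7) \ge 31$, so the task reduces to ruling out the three cases with $a \le 3$.

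For each of these, I will mimic the argument of Proposition \ref{pro.60}, applying \cite[Proposition 5.2]{GMR} to extract a low-degree curve $C$ containing a subscheme of $2\X$ of specified length, and then deriving a contradiction by case analysis of $C$ using the generic Hilbert function of $\X$. In case $(2,2,2)$, the bound $\Delta H_{2\X}(t) \le 2$ for $t \ge 7$ will yield a conic $C$ containing a subscheme of $2\X$ of length $H_{\mathrm{conic}}(6) + (33 - 27) = 13 + 6 = 19$. In cases $(3,3,0)$ and $(3,2,1)$, the bound $\Delta H_{2\X}(t) \le 3$ for $t \ge 7$ will yield a cubic $C$ containing a subscheme of $2\X$ of length $H_{\mathrm{cubic}}(6) + 6 = 18 + 6 = 24$. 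The two inputs I will use throughout are: first, that the generic Hilbert function of $\X$ forces no line to contain more than $5$ points of $\X$, no conic more than $8$, and no cubic more than $10$ (since exceeding any of these bounds would produce, by multiplying with a line through the complementary points, a form of degree at most $3$ in $I_\X$ and contradict the generic value of $H_\X$); and second, that the length of $2P \cap C$ at a point $P \in \X \cap C$ is $2$ when $C$ is smooth at $P$ and $3$ when $C$ is singular at $P$.

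Plugging these in will give the following bounds. For a conic, the three possibilities (smooth irreducible, union of two distinct lines with at most one singular point of $C$ in $\X$, double line) will cap the length of any subscheme of $2\X$ by $17 < 19$, ruling out $(2,2,2)$. For a cubic, the six possibilities (irreducible smooth or nodal/cuspidal, conic plus line, three distinct lines, double line plus line, triple line) will cap the length by $23 < 24$, ruling out $(3,3,0)$ and $(3,2,1)$. The main obstacle I anticipate is the cubic case analysis: six distinct reduced or non-reduced configurations must be treated individually, and the extremal bound $23$ is attained only in the tightest situation -- three lines in general position with all three pairwise intersection points lying in $\X$ -- so every step of the enumeration must be sharp, leaving no slack.
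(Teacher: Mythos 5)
Your strategy is genuinely different from the paper's: the paper never splits into cases on $(a,b,c)$ here. Instead it observes that $H_{2\X}(6)=27$ means $\big(I_{2\X}\big)_6$ is spanned by a single form $F_6$, uses B\'ezout ($44>42$) to force every degree-$7$ element of $I_{2\X}$ to share a component with $F_6$, invokes Bertini to conclude that $\big(I_{2\X}\big)_7$ either has a fixed component (necessarily a factor of $F_6$) or is composed with a pencil, and then runs a case analysis on $\deg F$ for $F$ the fixed component. Your plan, modeled on Proposition \ref{pro.60}, has two concrete gaps. First, the case $(a,b,c)=(3,2,1)$: here $\Delta H_{2\X}$ is $\ldots,6,6,3,2,1,0$, and the only pair of consecutive equal values is the $6,6$ in degrees $5,6$, where $6>5$. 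The result of \cite[Proposition 5.2]{GMR} (as used throughout the paper) requires $\Delta H(t)=\Delta H(t+1)=r$ with $r\le t$; it therefore produces no cubic (or any low-degree curve) in this case, and your method gives you nothing to contradict. This configuration of the difference function is a legitimate O-sequence, so it cannot be dismissed combinatorially; ruling it out is precisely where the paper's B\'ezout/fixed-component machinery does real work.

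Second, your claimed cap of $23$ for the degree of a subscheme of $2\X$ lying on a cubic is false for the non-reduced cubic $C=L^2L_1$. At a point $P\in\X$ on $L$ (off $L_1$) the curve is locally $V(x^2)\subseteq V(\mathfrak m_P^2)$, so $2P\cap C=2P$ has length $3$; with $5$ points of $\X$ on $L$ and $5$ more on $L_1$ (both allowed by $H_{gen,11}$), one gets $\deg(2\X\cap C)=5\cdot 3+5\cdot 2=25>24$. This particular subcase can be rescued, because Davis-type decomposition gives the degree on the fixed cubic as \emph{exactly} $24$ and $3a+2b=24$ has no solution with $a,b\le 5$; but that is a parity argument you did not state, and the bound ``$\le 23$'' as written does not hold. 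The conic case $(2,2,2)$ of your outline is fine (at most $8$ points of $\X$ on a conic, at most one singular, giving $\le 17<19$), and the reduction to $a\le 3$ is correct, but as it stands the proposal does not dispose of $(3,2,1)$ at all and mishandles the non-reduced cubics in $(3,3,0)$, so it is not a complete proof.
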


\begin{proof} The hypothesis $H_{2\X}(6) = 27$ implies that the defining ideal $I_{2\X}$ of $2\X$ has a unique generator, say $F_6$, in degree 6. We need to show that $\dim_k \big(I_{2\X}\big)_7 \le 5$.

Consider any homogeneous form $H_7$ of degree 7 in $I_{2\X}$. By B\'ezout's Theorem, $H_7$ and $F_6$ should have an intersection divisor of degree 42, but the 11 double points of $2\X$ are on both $H_7$ and $F_6$, contributing at least 44 to that degree. This can only the be the case if $H_7$ and $F_6$ share a common component. In particular, $H_7$ is reducible. This is true for any form in $\big(I_{2\X}\big)_7$. Thus, by Bertini's theorem either $\big(I_{2\X}\big)_7$ has a fixed component or $\big(I_{2\X}\big)_7$ is composed with a pencil.

\noindent{\bf We first assume that $\big(I_{2\X}\big)_7$ has a fixed component.}

Notice that $F_6x, F_6y, F_6z \in \big(I_{2\X}\big)_7$. Therefore, any fixed component of $\big(I_{2\X}\big)_7$ is a factor of $F_6$. We shall denote this fixed component by $F$. The following (rather long case by case) argument  focuses on the various possibilities for the degree of the fixed component $F$ of $\big(I_{2\X}\big)_7$.

\noindent {\bf Case a:} $\deg F = 6$. In this case $F = F_6$, and clearly elements of $\big(I_{2\X}\big)_7$ are of the form $FL$, where $L$ is a linear form. We then have $\dim_k \big(I_{2\X}\big)_7 = 3.$

\noindent {\bf Case b:} $\deg F = 5$. In this case, $F_6 = FL$ and $H_7 = FC_2$, where $L$ is a linear form and $C_2$ is a quadratic form.

By Proposition \ref{pro.atmostt}, $\big(I_{2\X}\big)_5 = (0)$. Thus, $F$ is not singular at all the points of $\X$.  Since $FL$ is singular at all the points of $\X$ this implies that $L$ must contain at least one point of $\X$.  By the same reasoning, the conic $C_2$ contains that same point. This puts at least one linear condition on the space of such quadrics and hence, the space of such quadrics has dimension at most 5. In particular, $\dim_k \big(I_{2\X}\big)_7 \le 5$.

\noindent {\bf Case c:} $\deg F = 4$. In this case $F_6 = FC_2$ and $H_7 = FC_3$, where $C_2$ is a quadratic form and $C_3$ is a cubic form. Again, since $F$ is not singular at all the points of $\X$, $C_2$ and $C_3$ are determined by  vanishing conditions at a subset $\Y$ of $\X$ (a subset that cannot be on a line; otherwise, $FL$ would be a form of degree 5 singular at all the points of $\X$). The Hilbert function of $\Y$ is, thus, of the form
$$H_\Y \ : \ \ 1 \ \ 3 \ \ 5 \ \ \ge 5 \ \ \cdots$$
This implies that the space of cubics through $\Y$ has dimension at most 5 and hence that $\dim_k \big(I_{2\X}\big)_7 \le 5$.

\noindent {\bf Case d:} $\deg F = 3$. In this case $F_6 = FC_3$ and $H_7 = FQ_4$, where $C_3$ is a cubic form and $Q_4$ is a quartic form. Note that $F \not= C_3$. As in the previous two cases, $C_3$ and $Q_4$ are determined by vanishing conditions imposed by a subscheme $\Z$ of $2\X$ (that is not contained in $F$).

\noindent{\it Claim 1.} The multiplicity of $\Z$ is at least 10.

\noindent{\it Proof of the Claim.} We shall consider different subcases depending on whether $F$ and $C_3$ share a common component.

\noindent{\it Case d.1:} $F$ and $C_3$ share a common component.

Suppose the highest degree common component of $F$ and $C_3$ is a conic, i.e., $F = C_2L$ and $C_3 = C_2L'$, where $C_2$ is a conic, $L$ and $L'$ are lines. Since $C_2$ contains at most 8 points of $\X$, at least 3 double points of $2\X$ must lie on the intersection of $L$ and $L'$. This implies that $L = L'$, i.e., $F = C_3$, a contradiction.

Suppose now that the highest degree common component of $F$ and $C_3$ has degree 1, i.e., $F = LC_2$ and $C_3 = LC_2'$, where $C_2$ and $C_2'$ are quadratic forms. Since the line $L$ contains at most 5 points of $\X$, if either $C_2$ or $C_2'$ has no singular points then for $FC_3$ to contain $2\X$, the intersection of $C_2$ and $C_2'$ must have at least 5 points of $2\X$. This means that $C_2$ and $C_2'$ share a component, and we are back to the previous case. It remains to consider the possibility that both $C_2$ and $C_2'$ have singular points, i.e., that $C_2 = L_1L_2$ and $C_2' = L_1'L_2'$, where $L_i \not= L_j'$. Again, since a line contains at most 5 points of $\X$, we must have $L_1 \not= L_2$ and $L_1' \not= L_2'$. In this case, $\X$ consists of exactly 5 points on $L$, the intersection of $L_1$ and $L_2$, the intersection of $L_1'$ and $L_2'$, and 4 intersection points of $L_i$ and $L_j'$. Therefore, the subscheme $\Z$ determining $Q_4$ consists of a double point at the intersection of $L_1'$ and $L_2'$, and 9 points other than the intersection of $L_1$ and $L_2$. Clearly, the multiplicity of $\Z$ is 12.

\noindent{\it Case d.2:} $F$ and $C_3$ share no common component.

The possibilities for a cubic $F$ are: $F$ is irreducible, $F = LC$, $F = L_1L_2.L_3$, $F = L^2L_1$ and $F = L^3$ (where $L, L_1, L_2$ and $L_3$ denote linear forms, and $C$ denotes an irreducible conic).

If $F$ is an irreducible cubic then $F$ contains at most 1 singular point, and $F$ and $C_3$ intersect at 9 points. Thus, $\Z$ consists of at least a singular point and 9 simple points, whence the multiplicity of $\Z$ is at least 12.

If $F = LC$ then $F$ has 2 double points. Since a cubic $F$ cannot contain all the 11 points of $\X$, $C_3$ has to contain at least a double point of $2\X$. Thus, in this case, the subscheme $\Z$ consists of $a$ double points (for some $a \ge 1$) and at least $(11-2-a)$ simple points in the intersection of $F$ and $C_3$. The multiplicity of $\Z$ is then at least $3a + (11-2-a) = 2a+9 \ge 11$.

If $F = L_1L_2L_3$ then $F$ has 3 double points. As before, $C_3$ must contain a double point of $2\X$, and thus, the subscheme $\Z$ consists of $a$ double points (for some $a \ge 1$) and at least $(11-3-a)$ simple points. The multiplicity of $\Z$ is then at least $3a + (11-3-a) = 2a + 8 \ge 10$.

If $F = L^2L_1$ then $L$ contains at most 5 points of $\X$. If $a$ out of 3 intersection points of $L_1$ and $C_3$ are points in $\X$ then $C_3$ must contain at least $(11-5-a)$ double points of $2\X$. Thus, the subscheme $\Z$ consists of at least $(11-5-a)$ double points and $a$ simple points. In this case, the multiplicity of $\Z$ is at least $3(11-5-a) + a = 18-2a \ge 12$, since $a \le 3$.

If $F = L^3$ then since $L$ contains at most 5 points of $\X$, the subscheme $\Z$ must have at least 6 double points of $2\X$; and thus, $\Z$ has multiplicity at least 18.
The claim is proved. \qed

We shall proceed with the proof of the proposition, having shown that the multiplicity of $\Z$ is at least 10. Notice that since $\Z$ belongs to exactly one cubic ($\dim_k \big(I_{2\X}\big)_6 = 1$), the Hilbert function of $\Z$ has the form
$$H_\Z \ : \ \ 1 \ \ 3 \ \ 6 \ \ 9 \ \ \ge 10 \ \ \cdots $$
This implies that the space of quartics containing $\Z$ has dimension at most 5. In particular, we have $\dim_k \big(I_{2\X}\big)_7 \le 5$.

\noindent {\bf Case e:} $\deg F = 2$. In this case $F_6 = FQ_4$ and $H_7 = FQ_5$, where $Q_4$ and $Q_5$ are quartic and quintic forms. As before, $Q_4$ and $Q_5$ are determined by a subscheme $\Z$ of $2\X$.

Since the conic $F$ contains at most 8 points of $\X$, $Q_4$ must contain at least 3 double points in $2\X$. Moreover, $F$ has at most one double point. Therefore, at least 7 of the intersection points of $F$ and $Q_4$ are points of $\X$. In this case, the subscheme $\Z$ consists of at least 3 double points and 7 simple points, and has multiplicity at least 16. Since there is a unique quartic (namely $Q_4$) that contains $\Z$, the Hilbert function of $\Z$ is of the form
$$H_\Z \ : \ \ 1 \ \ 3 \ \ 6 \ \ 10 \ \ 14 \ \ (15 \text{ or } 16) \ \ \ge 16 \ \ \dots$$

If $H_\Z(5) = 16$ then the space of quintics containing $\Z$ has dimension at most 5. In particular this implies that $\dim_k \big(I_{2\X}\big)_7 \le 5$ and we would be done.

If, however, $H_\Z(5) = 15$ then the Hilbert function of $\Z$ must be
$$H_\Z \ : \ \ 1 \ \ 3 \ \ 6 \ \ 10 \ \ 14 \ \ 15 \ \ 16 \ \ \rightarrow$$
(the increment from $14$ to $15$ forces $H_\Z(6) \le 16$). In this case, difference Hilbert function of $\Z$ is
$$\Delta H_\Z \ : \ \ 1 \ \ 2 \ \ 3 \ \ 4 \ \ 4 \ \ 1 \ \ 1 \ \ 0 \ \ \rightarrow $$
By \cite[Proposition 5.2]{GMR}, there is a line containing 7 points of $\Z$ (hence at least 7 points of $\X$). This contradicts the fact that $\X$ has  generic Hilbert function.

\noindent {\bf Case f:} $\deg F = 1$. In this case $F_6 = FQ_5$ and $H_7 = FS_6$, where $Q_5$ and $S_6$ are quintic and sextic forms, respectively.

The line $F$ contains at most 5 points of $\X$. Thus, $Q_5$ must contain at least 6 double points of $2\X$. Moreover, if $Q_5$ contains at least 10 double points of $2\X$ then, since the space of lines going through the 11th point of $\X$ has dimension 2, the space of such $F_6$'s would have dimension 2 - a contradiction to the hypothesis. Therefore, $Q_5$ contains at most 9 double points of $2\X$. Suppose $\Y$ is the subscheme of $2\X$ determining $Q_5$ (and $S_6$) consisting of $a$ double points (for some $6 \le a \le 9$) and $(11-a)$ simple points on the line $F$.

We shall denote by $\Z$ the subscheme of $a$ double points in $\Y$. Let $I_\Y$ and $I_\Z$ denote the defining ideals of $\Y$ and $\Z$ in $\PP^2$. Also let $\I_\Y$ and $\I_\Z$ denote the ideal sheaves of $\Y$ and $\Z$. We have the following short exact sequence
\begin{align}
0 \rightarrow \I_\Z(t-1) \stackrel{\times F}{\rightarrow} \I_\Y(t) \rightarrow \I_{\Y \cap F}(t) \rightarrow 0. \label{eq2}
\end{align}
Note further that the homogeneous pieces $\big(I_\Y\big)_t$ and $\big(I_\Z\big)_t$ can be identified with $H^0(\I_\Y(t))$ and $H^0(\I_\Z(t))$. It follows from the short exact sequence (\ref{eq2}) that
\begin{align}
h^0(\I_\Y(6)) \le h^0(\I_\Z(5)) + h^0(\I_{\Y \cap F}(6)). \label{eq3}
\end{align}

Observe that $\Y \cap F$ consists of $(11-a)$ simple points on a line, so $\I_{\Y \cap F}(6) \simeq \O_{\PP^1}(6-(11-a)) = \O_{\PP^1}(a-5)$. Thus, $h^0(\I_{\Y \cap F}(6)) = h^0(\O_{\PP^1}(a-5)) = a-4$. Observe further that the {\bf Question} has a positive answer for $\le 9$ points (see \cite{GHM1, GHM2}). Therefore:
\begin{enumerate}
\item If $\Z$ consists of 6 (i.e., $a = 6$) double points then the Hilbert function of $\Z$ is at least
\begin{align*}
1 \ \ 3 \ \ 6 \ \ 10 \ \ 14 \ \ 18 \ \ \rightarrow
\end{align*}
\item If $\Z$ consists of 7 (i.e., $a = 7$) double points then the Hilbert function of $\Z$ is at least
\begin{align*}
1 \ \ 3 \ \ 6 \ \ 10 \ \ 15 \ \ 19 \ \ \cdots
\end{align*}
\item If $\Z$ consists of 8 (i.e., $a = 8$) double points then the Hilbert function of $\Z$ is at least
\begin{align*}
1 \ \ 3 \ \ 6 \ \ 10 \ \ 15 \ \ 20 \ \ \cdots
\end{align*}
\item If $\Z$ consists of 9 (i.e., $a = 9$) double points then the Hilbert functions of $\Z$ is at least
\begin{align*}
1 \ \ 3 \ \ 6 \ \ 10 \ \ 15 \ \ 20 \ \ 24 \ \ 27 \rightarrow
\end{align*}
\end{enumerate}
It can be checked using (\ref{eq3}) that for $6 \le a \le 8$, $h^0(\I_\Y(6)) \le 5$. This implies that, for $6 \le a \le 8$, the space of sextics containing $\Y$ (i.e., the $S_6$'s above) have dimension at most 5, whence $\dim_k \big(I_{2\X}\big)_7 \le 5.$ For the remaining value $a = 9$, since $H_\Z(6) \ge 24$, the space of sextics containing $\Z$ has dimension at most 4. It then follows that the space of sextics containing $\Y$ has dimension at most 4, whence $\dim_k \big(I_{2\X}\big)_7 \le 4.$

The case when $\big(I_{2\X}\big)_7$ has a fixed component is completed.

\medskip\noindent{\bf Let us now assume that $\big(I_{2\X}\big)_7$ is composed with a pencil.}

In this case, since $7$ is a prime number, there exist two linear forms $W$ and $V$ such that any element $H_7$ of $\big(I_{2\X}\big)_7$ is a degree 7 polynomial in $W$ and $V$. This implies that $H_7$ factors into 7 linear factors in $W$ and $V$ (which are also linear in $x,y$ and $z$).

Recall that $F_6$ is the unique generator in degree 6 of $I_{2\X}$, and notice again that $F_6x, F_6y, F_6z \in \big(I_{2\X}\big)_7$. Thus, $F_6x, F_6y$ and $F_6z$ can be written as polynomials of degree 7 in $W$ and $V$. This, in particular, implies that
$$\dfrac{x}{z} = \dfrac{F_6x}{F_6z}, \dfrac{y}{z} = \dfrac{F_6y}{F_6z} \in k\Big(\dfrac{W}{V}\Big).$$
That is,
$$k\Big(\dfrac{x}{z}, \dfrac{y}{z}\Big) \subseteq k\Big(\dfrac{W}{V}\Big).$$
This is a contradiction. Hence, $\big(I_{2\X}\big)_7$ cannot be composed with a pencil. The proposition is proved.
\end{proof}

\begin{proposition} \label{pro.atleast2}
$H_{2\X}(6) \ge 26.$
\end{proposition}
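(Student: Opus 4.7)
The plan is to prove the contrapositive: assume $\dim_k (I_{2\X})_6 \ge 3$, let $\L := \PP\bigl((I_{2\X})_6\bigr)$ denote the resulting linear system (of projective dimension at least $2$), and derive a contradiction.

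The first step is to show that $\L$ must have a fixed component. For any two sextics $F, F' \in (I_{2\X})_6$ sharing no common factor, B\'ezout gives $\deg(F \cdot F') = 36$, while each of the $11$ double points of $2\X$ contributes at least $2 \cdot 2 = 4$ to the local intersection multiplicity, for a total of at least $44 > 36$. Hence every pair of elements of $\L$ has a common factor. If $\L$ had no fixed component, Lemma \ref{BertiniType} would produce a reduced general element $F = G_1 \cdots G_r$ with distinct irreducible factors, and $\L$ would be the union of the finitely many proper linear subspaces $\{F' \in \L : G_i \mid F'\}$; by irreducibility of $\L$ one of these equals $\L$, exhibiting some $G_i$ as a fixed component --- contradiction. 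Thus $\L$ admits a maximal fixed component $F_0$ of some degree $e \in \{1,\ldots,6\}$, and every element of $\L$ factors as $F_0 \cdot G$ with $G$ varying in a residual linear system $\mathcal{M}$ of forms of degree $6-e$; my task reduces to showing $\dim_k \mathcal{M} \le 2$.

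The case $e=6$ is immediate, and I handle the remaining cases by a systematic analysis following the pattern of cases (a)--(f) in the proof of Proposition \ref{pro.61}. Write $m := |\X \cap F_0|$ and let $s$ denote the number of points of $\X$ that are singular on $F_0$. Proposition \ref{pro.atmostt} rules out $F_0$ being singular at every point of $\X$, so the residual $G$ must be singular at each of the $11-m$ points of $\X \setminus F_0$ and must vanish simply at each of the $m-s$ smooth points of $F_0$ in $\X$. Hence $\mathcal{M}$ lies inside the degree-$(6-e)$ piece of the ideal of a subscheme $\Y \subseteq 2\X$; the generic Hilbert function of $\X$ (at most $5$ points on a line and at most $8$ on a conic, as used in the proof of Proposition \ref{pro.61}) together with Theorem \ref{thm.intro1} and the results of \cite{GHM1, GHM2} constrains the Hilbert function of $\Y$ in degree $6-e$.

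The uniform mechanism I will exploit is to iterate the B\'ezout--Bertini step on $\mathcal{M}$ itself. Two elements $G_1, G_2 \in \mathcal{M}$ accumulate at least $4(11-m) + (m-s) = 44 - 3m - s$ of intersection multiplicity at points of $\X$, while B\'ezout gives $(6-e)^2$; whenever $44-3m-s > (6-e)^2$, $G_1$ and $G_2$ share a factor, and Lemma \ref{BertiniType} applied to $\mathcal{M}$ yields a fixed component of $\mathcal{M}$, which multiplied into $F_0$ produces a fixed component of $\L$ of strictly larger degree --- contradicting the maximality of $F_0$. When that inequality fails (for instance $e=2$ with $m$ close to $10$, or $e=5$ with $F_0$ very singular), I instead pair a single $G$ against $F_0$: B\'ezout gives $\deg(G \cdot F_0) = e(6-e)$ while smooth points of $F_0 \cap \X$ contribute at least $m-s$, so whenever $m-s > e(6-e)$, $F_0$ (or an irreducible component of it) must divide $G$, again contradicting maximality. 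The main obstacle will be the short list of residual cases where both B\'ezout inequalities simultaneously fail, which occur only when $F_0$ already contains nearly all of $\X$; in these I will bound $\dim_k \mathcal{M}$ directly using a residuation exact sequence as in case (f) of the proof of Proposition \ref{pro.61}, combined with the known minimum Hilbert functions of double point schemes on $\le 10$ points supplied by Theorem \ref{thm.intro1} and \cite{GHM1, GHM2}.
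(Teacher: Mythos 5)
Your strategy is genuinely different from the paper's and, after running the numbers, it does close --- but what you have submitted is a plan whose decisive case analysis is deferred, and your forecast of where the difficulties lie is inaccurate. The paper argues instead through the difference Hilbert function: writing $\Delta H_{2\X}=(1,2,3,4,5,6,a,b,c,d,0,\dots)$ with $a+b+c+d=12$, $a\ge b\ge c\ge d\ge 0$ and $a<5$ leaves only five tuples, and for each of them \cite[Proposition 5.2]{GMR} produces a quartic or cubic containing a subscheme of $2\X$ of degree $30$, $26$ or $27$, which in every case forces too many points of $\X$ onto a line or a conic. Your route --- a maximal fixed component $F_0$ of $(I_{2\X})_6$ followed by B\'ezout on the residual system $\mathcal{M}$ --- is the method the paper reserves for Propositions \ref{pro.61} and \ref{pro.62}; it avoids \cite[Proposition 5.2]{GMR} at the price of an enumeration over $(\deg F_0, m, s)$. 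Your preliminary steps (every pair of sextics in $I_{2\X}$ meets in degree at least $44>36$ at the double points, hence shares a factor; the union-of-linear-subspaces argument plus Lemma \ref{BertiniType} then forces a fixed component) are sound.

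The gap is that the enumeration, which is the actual content of the proof, is not on the page. When you carry it out, using the bounds from $\Delta H_\X=(1,2,3,4,1)$ --- at most $5$, $8$, $10$ points of $\X$ on a line, conic, cubic respectively, applied to the \emph{support} of $F_0$ (so, e.g., $F_0=L^2L_1$ has $m\le 8$ and $F_0=C_2^2$ has $m=s\le 8$) --- your first inequality $44-3m-s>(6-e)^2$ holds in every case except exactly one: $e=5$ with $m=11$ and $s=10$ (note $s=11$ is excluded by Proposition \ref{pro.atmostt}, and $s=10$ forces $F_0$ to be five distinct lines with all ten nodes in $\X$). In particular, the scenario ``$e=2$ with $m$ close to $10$'' cannot occur since a conic carries at most $8$ points of $\X$; the second B\'ezout pairing against $F_0$ is never needed; and neither the residuation exact sequence nor the $\le 10$-point minima from Theorem \ref{thm.intro1} and \cite{GHM1, GHM2} ever enter. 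In the single exceptional case $\mathcal{M}$ consists of the linear forms vanishing at the unique smooth point of $F_0$ lying in $\X$, so $\dim_k\mathcal{M}\le 2$, contradicting $\dim_k(I_{2\X})_6\ge 3$. Write out that table explicitly (and justify the $5/8/10$ bounds from the generic Hilbert function) and your argument becomes a complete, self-contained alternative to the paper's proof.
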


\begin{proof} Suppose, by contradiction that $H_{2\X}(6) < 26$. By Proposition \ref{pro.atmostt} and the fact that $\deg 2\X = 33$, the difference function of the Hilbert function of $2\X$ is of the form
$$\Delta H_{2\X} : \ \ 1 \ \ 2 \ \ 3 \ \ 4 \ \ 5 \ \ 6 \ \ a \ \ b \ \ c \ \ d \ \ 0 \ \ \rightarrow$$
where $a+b+c+d = 12$ and $a \ge b \ge c \ge d \ge 0$. By the assumption that $H_{2\X}(6) < 26$, we also have that $a < 5$. The possibilities for $(a,b,c,d)$ are: $(4,4,4,0)$, $(4,4,3,1)$, $(4,4,2,2)$, $(4,3,3,2)$ and $(3,3,3,3)$.

\noindent {\bf Case 1:} $(a,b,c,d) = (4,4,4,0), (4,4,3,1)$ or $(4,4,2,2)$. By \cite[Proposition 5.2]{GMR}, there is a quartic, $Q_4$, containing a subscheme $\Y$ of $2\X$ of degree 30. Since each simple point on $Q_4$ contributes 2 and each double point contributes 3 to the degree, the only possibilities for $\Y$ are that it consists of 10 double points of $2\X$, or it consists of 8 double points of $2\X$ and 3 simple points of $\X$.

An irreducible quartic has at most 3 singular points, so $Q_4$ is not irreducible. It can also be easily seen that a reduced quartic has at most 6 singular points. Therefore, $Q_4$ is non-reduced. The possibilities are: $Q_4 = C_2^2$, $Q_4 = C_2L^2$, $Q_4 = L^3L_1$, and $Q_4 = L^4$, where $L$ and $L_1$ denote distinct linear forms, and $C_2$ denotes an irreducible conic.

If $Q_4 = C_2^2$ then the conic $C_2$ contains $\ge$ 10 points of $\X$, a contradiction.

If $Q_4 = C_2L^2$ then, since $C_2$ has no singular points, $\ge$ 8 points of $\X$ must be on $L$, a contradiction.

If $Q_4 = L^3L_1$ or $Q_4 = L^4$, then we are again forced to have $L$ contain $\ge$ 8 points of $\X$, a contradiction.

\noindent{\bf Case 2:} $(a,b,c,d) = (4,3,3,2)$. Again, by \cite[Proposition 5.2]{GMR}, there is a cubic $C_3$ containing a subscheme $\Y$ of $2\X$ of degree 26. The only possibility to get multiplicity 26 is that $\Y$ consists of 8 double points and a simple point of $2\X$.

A reduced cubic has at most 3 double points. Thus, $C_3$ is non-reduced. The possibilities for $C_3$ are: $C_3 = L^2L_1$ or $C_3 = L^3$. In both cases the line $L$ is forced to contain at least 8 points of $\X$, a contradiction.

\noindent{\bf Case 3:} $(a,b,c,d) = (3,3,3,3)$. As before, by \cite[Proposition 5.2]{GMR}, there is a cubic $C_3$ containing a subscheme $\Y$ of $2\X$ having  degree 27. This is only possible if $\Y$ consists of 9 double points of $2\X$.

An argument similar to that of Case 2 shows that $C_3$ is non-reduced.  Also in this case we are forced to conclude that there is a line containing at least 9 points of $\X$, a contradiction.

This completes the proof of our proposition.
\end{proof}

\begin{proposition} \label{pro.62}
If $H_{2\X}(6) = 26$ then $H_{2\X}(7) \ge 31$ and thus $H_{2\X} \ge H_{2C_{5,1}}$.
\end{proposition}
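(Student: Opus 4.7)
The hypothesis $H_{2\X}(6)=26$ means $V_6 := (I_{2\X})_6$ is two-dimensional; fix a basis $F_1, F_2$. The goal is $\dim (I_{2\X})_7 \le 5$ (equivalently $H_{2\X}(7) \ge 31$); the ``thus'' consequence $H_{2\X} \ge H_{2 C_{5,1}}$ follows from monotonicity of $H_{2\X}$, the eventual value $H_{2\X}(t) = \deg 2\X = 33$, and the elementary fact that $\Delta H_{2\X}(t) = 0$ forces $\Delta H_{2\X}(t') = 0$ for all $t' \ge t$. The key reduction is that
\[
V_6 \cdot R_1 \subseteq (I_{2\X})_7, \qquad \dim V_6 \cdot R_1 = 6 - \#\{\text{linear syzygies on } F_1, F_2\},
\]
and a short calculation (write $G = \gcd(F_1, F_2)$, $F_i = G F_i'$; a syzygy $\ell_1 F_1 + \ell_2 F_2 = 0$ forces $F_2' \mid \ell_1$) shows that a linear syzygy exists iff $e := \deg G \ge 5$, and is then unique up to scalar. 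Hence $\dim (I_{2\X})_7 \le 5$ is possible only when $e = 5$. My plan is to prove this by (a) excluding $e \le 4$ and (b) directly computing $\dim (I_{2\X})_7 = 5$ when $e = 5$.

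For (a), Bezout on $F_1, F_2$ themselves forces $e \ge 1$ (since $36 < 44$). Then $F_1', F_2'$ are coprime of degree $6-e$; each has multiplicity $\ge 2$ at every point of $\X_{G^c}$ and multiplicity $\ge 1$ at each smooth point of $G$ lying in $\X_G$. Bezout on $(F_1', F_2')$ yields
\[
3|\X_G| + |\X_G \cap \mathrm{sing}(G)| \;\ge\; 44 - (6-e)^2.
\]
I will combine this with the bounds valid for $\X \in \chi(H_{gen,11})$ --- at most $5$ points on a line, $8$ on a conic, $10$ on a cubic (each a direct calculation from $H_{gen,11} = (1, 3, 6, 10, 11, 11, \dots)$) --- and enumerate the factorization types of $G$ (irreducible, reducible reduced, and non-reduced like $L^2$, $L^2 L_1$, $L^3$, $L_1^2 L_2^2$, $C^2$, and so on) in each degree $e \in \{1, 2, 3, 4\}$ to verify that the inequality cannot be satisfied. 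The only borderline case is $e = 3$ with $G = L^2 L_1$, which achieves equality; there the five simple zeros of $F_i'$ all lie on the line $L_1$, and Bezout on $F_i'$ and $L_1$ forces $L_1 \mid F_i'$, contradicting $\gcd(F_1', F_2') = 1$.

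For (b), the inequality with $e = 5$ gives $\X \subset G$ and $|\X_G \cap \mathrm{sm}(G)| \le 1$, so at least ten of the eleven points of $\X$ are singular on the quintic $G$. Equation \eqref{eq1} with $d = 5$ caps the singular points of a reduced quintic at $4 + r$ ($r$ the number of irreducible components), and the max-on-line bound kills the non-reduced alternatives; hence $G = M_1 M_2 M_3 M_4 M_5$ is a product of five distinct lines, $\X$ consists of the ten pairwise intersections $M_i \cap M_j$ together with one smooth point $P_{11}$ of $G$ satisfying $L_1(P_{11}) = L_2(P_{11}) = 0$ (writing $F_i = G L_i$). For any $H \in (I_{2\X})_7$ and any line $M_i$ of $G$, the line $M_i$ contains at least four points of $\X$ at each of which $H$ has a double point, contributing intersection multiplicity $\ge 2$; the total $\ge 8$ exceeds $\deg M_i \cdot \deg H = 7$, forcing $M_i \mid H$ by Bezout. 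Hence $G \mid H$; writing $H = G H'$ with $\deg H' = 2$, the only residual condition from $2\X$ is $H'(P_{11}) = 0$ (the ten nodes are already covered since $G$ is singular there). Therefore $(I_{2\X})_7 = G \cdot (I_{P_{11}})_2$, a space of dimension $5$, which completes the proof.

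The principal burden of work lies in step (a): one must carefully enumerate the factorization types of $G$ in each degree $\le 4$ (including non-reduced forms) and check the refined Bezout inequality against the max-points-on-curve bounds, with the coprimality of $F_1', F_2'$ handling the borderline cases. The case $e = 5$ in step (b), by contrast, is quite rigid once identified, and the final dimension count is essentially immediate.
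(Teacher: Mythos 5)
Your proposal is correct, and while it shares the paper's overall skeleton---both arguments reduce to a case analysis on the degree $e$ of the fixed component $G=\gcd(F_1,F_2)$ of the pencil $(I_{2\X})_6$, exclude $1\le e\le 4$, and identify the $C_{5,1}$ configuration when $e=5$---your engine for the exclusion step is genuinely different. The paper rules out each degree $e\le 4$ by bespoke arguments on the residual linear system: it shows the moving parts would be cut out by a subscheme whose Hilbert function forces the system to have dimension $\le 1$ (contradicting the pencil hypothesis), or that the fixed component could be enlarged. You instead run a single refined B\'ezout inequality on the coprime residual forms $F_1',F_2'$,
\[
3\,|\X\cap G|+|\X\cap\operatorname{Sing}(G)|\ \ge\ 44-(6-e)^2,
\]
tested against the uniform bounds ($\le 5$ points of $\X$ on a line, $\le 8$ on a conic, $\le 10$ on a cubic) over the factorization types of $G$; I have checked that this enumeration closes in every case, with $G=L^2L_1$, $e=3$ the unique borderline case, resolved exactly as you indicate. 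Your preliminary syzygy count (no linear syzygy on $F_1,F_2$ unless $e\ge 5$, hence $\dim(I_{2\X})_7\ge 6$ whenever $e\le 4$) is a nice a priori explanation of why those cases must be excluded outright rather than merely estimated, and your endgame for $e=5$ computes $(I_{2\X})_7=G\cdot(I_{P_{11}})_2$ directly rather than quoting the known Hilbert function of $2C_{5,1}$ as the paper does. Two small repairs: the bound on the singular points of a reduced quintic with $r$ components is $\binom{4}{2}+r-1=r+5$, not $4+r$ (your conclusion that $r=5$ and $G$ is five distinct, pairwise non-concurrent lines is unaffected, since $r+5\ge 10$ still forces $r=5$); and the enumeration in step (a), which you present as a plan, is the real content of the argument and would need to be written out in full.
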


\begin{proof} Let $S_6$ and $S_6'$ be any two elements that generate the 2-dimensional vector space $\big(I_{2\X}\big)_6$. By B\'ezout's Theorem $S_6$ and $S_6'$ should meet in a divisor of degree 36, but the 11 double points in $2\X$ already give at least degree 44 for this divisor. Thus, $S_6$ and $S_6'$ share a common factor. In particular, this implies that $\big(I_{2\X}\big)_6$ has a fixed component. We shall call this fixed component $F$.
Our proof proceeds by a case by case analysis  based on the degree of $F$.

\noindent{\it Claim 1.} If $\deg F = 5$ then $\X$ is the same as a $C_{5,1}$ configuration.

\noindent{\it Proof of the Claim.} In this case, elements of $\big(I_{2\X}\big)_6$ are linear combinations of $FL_1$ and $FL_2$, where $L_1$ and $L_2$ are distinct linear forms. Since $L_1$ and $L_2$ intersect in 1 point, $F$ must contain at least 10 double points of $2\X$. By using B\'ezout's Theorem and considering possible factorizations of $F$, it can be shown that this is the case only if $F$ is the product of 5 distinct lines, and the 10 double points on $F$ are the intersection points of these lines. The last double point of $2\X$ must be the intersection of $L_1, L_2$ and $F$. Thus, $\X$ is exactly the configuration $C_{5,1}$, and we are done. \qed

\noindent{\it Claim 2.} $\deg F \not= 4$.

\noindent{\it Proof of the Claim.} Suppose $\deg F = 4$. In this case, a general element of $\big(I_{2\X}\big)_6$ is of the form $FC$, where $C$ is a reduced conic (by Lemma \ref{BertiniType}).

\noindent{\it Case a:} $F$ is reduced. The possibilities for $F$ are: $F$ is irreducible, $F = LC_3$, $F = C_2C_2'$, $F = C_2L_1L_2$, and $F = L_1L_2L_3L_4$, where $L$ and the $L_i$ denote distinct linear forms, $C_2$ and $C_2'$ denote distinct irreducible quadrics, and $C_3$ denotes an irreducible cubic form.

If $F$ is irreducible then $F$ has at most 3 singular points. Thus, the other 8 double points of $2\X$ must be from $C$ or the intersection $C \cap F$. This implies that $C$ contains at least 8 points of $\X$. The Hilbert function of these 8 points must be $1 \ \ 3 \ \ 5 \ \ 7 \ \ 8 \ \ \rightarrow$. This implies that there is a unique such conic $C$. In particular, the pencil $\big(I_{2\X}\big)_6$ has dimension 1, a contradiction.

If $F = LC_3$ then since $C_3$ has at most 1 singular point and $C_3$ intersects $L$ at 3 points, at least 7 double points of $2\X$ come from $C$ or the intersection $C \cap F$. That is, $C$ goes through 7 points of $\X$. The Hilbert function of these 7 points is $1 \ \ 3 \ \ 5 \ \ (6 \text{ or } 7) \ \ 7 \ \ \rightarrow$. Again, in this case, the pencil $\big(I_{2\X}\big)_6$ has dimension 1, a contradiction.

If $F = C_2C_2'$ then since $C_2$ and $C_2'$ intersect at 4 points, again, $C$ must contain at least 7 points of $\X$. As before, this leads to a contradiction.

If $F = C_2L_1L_2$ then since $L_1 \cap L_2$ has 1 point and $C_2 \cap L_i$ has 2 points, $C$ must contain at least 6 points of $\X$. If 4 or 5 of these points are on a line, say $L$, then $L$ intersects an element of $\big(I_{2\X}\big)_6$ at 4 or 5 double points (of $2\X$). A line and a degree 6 form should only meet at 6 points, and thus, $L$ is a component of $\big(I_{2\X}\big)_6$. We are back in Case 1. If at most 3 points of these 6 points are on a line, then the Hilbert function of these 6 points is $1 \ \ 3 \ \ 5 \ \ 6 \ \ \rightarrow$. Once again, this leads to a contradiction of the fact that $\big(I_{2\X}\big)_6$ is a pencil.

If $F = L_1L_2L_3L_4$ then, since these lines intersect in at most 6 points, $C$ must contain at least 5 fixed points of $\X$. If 4 of these 5 points are on a line, say $L$, then (as before) $L$ must be a component of $\big(I_{2\X}\big)_6$, and we are back to Case 1. If at most 3 of these 5 points are on a line, then the Hilbert function of the 5 points is $1 \ \ 3 \ \ 5 \ \ \rightarrow$, and, as before, we are lead to a contradiction.

\noindent{\it Case b:} $F$ is non-reduced. The possibilities for $F$ are: $F = C_2^2$, $F = C_2L^2$, $F = L^3L_1$, $F = L^2L_1L_2$ and $F = L^4$, where $L$ and $L_i$ denote linear forms and $C_2$ denotes an irreducible conic.

If $F = C_2^2$ then, since $C$ has at most 1 singular point, $C_2$ must contain at least 10 points of $\X$, a contradiction to the fact that $\X$ has generic Hilbert function.

If $F = C_2L^2$ then, since $C$ and $C_2$ intersect at 4 points, $C_2$ has no singular points and $C$ has at most 1 singular point, $L$ must contain at least 6 points of $X$. Again, this is a contradiction.

If $F = L^3L_1$, $F = L^2L_1L_2$ or $F=L^4$, then it can be shown that too many points of $X$ are on a line. \qed

\noindent{Claim 3.} $\deg F \not= 3.$

\noindent{\it Proof of the Claim.} Suppose $\deg F = 3$. In this case, a general element of $\big(I_{2\X}\big)_6$ has the form $FC_3$, where $C_3$ is a reduced cubic (by Lemma \ref{BertiniType}).

\noindent{\it Case a:} $F$ is reduced. The possibilities for $F$ are: $F$ is irreducible, $F = C_2L_1$, $F = L_1L_2L_3$, where the $L_i$ are distinct linear forms and $C_2$ denotes an irreducible conic.

If $F$ is irreducible then $F$ has at most 1 singular point. Thus, $C_3$ must contain at least 10 fixed points of $\X$. Since $\X$ has generic Hilbert function (and a point imposes at most one independent condition to cubics), the Hilbert function of these 10 points must be $1 \ \ 3 \ \ 6 \ \ (\ge 9) \ \ \cdots$ This implies that there is a unique such cubic $C_3$. This leads to a contradiction of the fact that $\big(I_{2\X}\big)_6$ is a pencil.

Suppose that $F = C_2L_1$. Since $C_2 \cap L_1$ has 2 points, $C_3$ must contain $a \ge 0$ double points of $2\X$ and at least $(9-a)$ intersection points $F \cap C_3$. Observe that the two cubics $C_3$ and $C_3'$ (for which $FC_3$ and $FC_3'$ generate $\big(I_{2\X}\big)_6$) intersect at 9 points, but $a$ double points and $(9-a)$ single points give $4a+(9-a) = 9+3a$ points of intersection. Thus, if $a \ge 1$, then $C_3$ and $C_3'$ share a common factor that could be absorbed into the fixed component $F$ of $\big(I_{2\X}\big)_6$, and we are back to a previous case. Assume that $a = 0$ and $C_3$ contains the 9 intersection points of $F \cap C_3$ (a complete intersection of two cubics). The Hilbert function of these 9 points is $1 \ \ 3 \ \ 6 \ \ 8 \ \ 9 \ \ \rightarrow$. Therefore, there are exactly two cubics containing them (the 9 points). In this case, there is only one such $C_3$ (the other cubic is $F$), and this leads to a contradiction of the fact that $\big(I_{2\X}\big)_6$ is a pencil.

Suppose that $F = L_1L_2L_3$. Since the lines $L_i$ intersect at 3 points, $C_3$ must contain $a \ge 0$ double points of $2\X$ and at least $(8-a)$ points of the intersection $F \cap C_3$. As before, the two cubics $C_3$ and $C_3'$ (for which $FC_3$ and $FC_3'$ generate $\big(I_{2\X}\big)_6$) intersect at 9 points, so if $a \ge 1$ then $a$ double points and $(8-a)$ single points would give $4a+(8-a) = 8+3a > 9$ intersection points, and therefore, we can enlarge the fixed component $F$ and are back to a previous case. Assume that $a = 0$. Then $C_3$ contains at least 8 intersection points of $F \cap C_3$. In this case, $C_3$ will intersect two of the lines $L_1, L_2$ and $L_3$ in at least 6 points. The union of these two lines (a conic) now contains at least 9 points of $\X$, a contradiction.


\noindent{\it Case b:} $F$ is non-reduced. The possibilities for $F$ are: $F = L^2L_1$ and $F = L^3$, where $L$ and $L_1$ denote distinct linear forms.

Suppose that $F = L^2L_1$. Since $L$ contains at most 5 points of $\X$, $C_3 \cap L_1$ has 3 points and $C_3$ has at most 3 singular points, it must be the case that $L$ contains exactly 5 points of $\X$ and $C_3$ has exactly 3 double points of $2\X$. The Hilbert function of these 3 double points is $1 \ \ 3 \ \ 6 \ \ 9 \ \ 12 \ \ \rightarrow$. Thus, there is at most one such cubic $C_3$. That is, $\big(I_{2\X}\big)_6$ has dimension at most 1, a contradiction.

Suppose that $F = L^3$. Since $C_3$ has at most 3 singular points, $L$ must contain at least 8 points of $\X$. This is also a contradiction. \qed

\noindent{\it Claim 4.} $\deg F \not= 2$.

\noindent{\it Proof of the Claim.} Suppose $\deg F = 2$. A general element of $\big(I_{2\X}\big)_6$ has the form $FQ_4$, where $Q_4$ is a reduced quartic.

\noindent{\it Case a:} $F$ is irreducible. In this case $F \cap Q_4$ has 8 points. Thus, $Q_4$ must contain at least 3 double points of $2\X$. Suppose the subscheme $\Z$ of $2\X$ determining the space of such $Q_4$'s consists of $a \ge 3$ double points and $(11-a)$ points of the intersection $F \cap Q_4$. Observe that two quartics $Q_4$ and $Q_4'$ (for which $FQ_4$ and $FQ_4'$ generate $\big(I_{2\X}\big)_6$) intersect at 16 points, but $a$ double points and $(11-a)$ single points give $4a + (11-a) = 11+3a > 16$ points of intersection for any $a \ge 3$. Thus, $Q_4$ and $Q_4'$ share a common factor, which now can be absorbed into the fixed component $F$ of $\big(I_{2\X}\big)_6$, and we are back in previous cases.


\noindent{\it Case b:} $F$ is reducible. The possibilities are: $F = L_1L_2$ and $F = L^2$, where $L$ and the $L_i$'s denote distinct linear forms.

Consider the case where $F = L_1L_2$. Note that the intersection point, $L_1 \cap L_2$, and the 8 intersection points, $F \cap Q_4$, cannot all be in $\X$ (otherwise, $F$ is a conic containing 9 points of $\X$). Thus, $Q_4$ must contain at least 3 double points in $2\X$, and the subscheme $\Z$ of $2\X$ determining such $Q_4$'s consists of $a \ge 3$ double points and at least $(10-a)$ points of the intersection $F \cap Q_4$. By an argument similar to one we've seen before, we can enlarge the fixed component $F$ and appeal to previous cases.

Consider the case where $F = L^2$. Since $L$ contains at most 5 points of $\X$, $Q_4$ must contain at least 6 double points in $2\X$. These 6 double points give at least 24 intersection points of two such quartics $Q_4$ and $Q_4'$. Again, this leads to the situation where we can enlarge $F$ and appeal to previous cases. \qed

\noindent{\it Claim 5.} $\deg F \not= 1$.

\noindent{\it Proof of the Claim.} Suppose $\deg F = 1$. A general element of $\big(I_{2\X}\big)_6$ has the form $F.Q_5$, where $Q_5$ is a reduced quintic.

Since the line described by $F$ contains at most 5 points of $\X$, the quintic $Q_5$ must have at least 6 double points of $2\X$ (and passes through the points of $\X$ on $F$). Observe that the two distinct quintics in the pencil $\big(I_{2\X}\big)_6$ meet at 25 points. However, the 6 double points and the 5 simple points on these quintics give at least 29 points of intersection. This is the case only if the two quintics share a common component. In particular, we can again enlarge the fixed component $F$ and appeal to previous cases.

This concludes the claim and proves the proposition.
\end{proof}


\end{document}